\newtheorem{thm}{Theorem}[section]
\newtheorem{lem}[thm]{Lemma}
\newtheorem{pro}[thm]{Proposition}
\newtheorem{cor}[thm]{Corollary}
\newtheorem{rmk}[thm]{Remark}
\numberwithin{equation}{section}
\newcommand{\R}{\mathbb{R}}
\newcommand{\N}{\mathbb{N}}
\newcommand{\ep}{\varepsilon}
\newcommand{\ke}[1]{\mathcal{I}_K(#1)}
\newcommand{\linfb}[2]{\left\| #1\right\|_{\mathrm{L}^{\infty}\left(B_{#2}\right)}}
\newcommand{\linf}[2]{\left\| #1\right\|_{\mathrm{L}^{\infty}\left(#2\right)}}
\newcommand{\linfr}[1]{\left\| #1\right\|_{\mathrm{L}^{\infty}\left(\R^n\right)}}
\newcommand{\calpha}[2]{\left\| #1\right\|_{\mathcal{C}^{1,\alpha}\left(B_{#2}\right)}}
\newcommand{\Cinfb}[1]{\mathcal{C}^{\infty}\left(B_{#1}\right)}
\newcommand{\Gb}[2]{\mathcal{G}^{#1}\left(B_{#2}\right)}
\newcommand{\Gev}[2]{\mathcal{G}^{#1}\left(#2\right)}
\newcommand{\Laps}{(-\Delta)^s}
\newcommand{\ga}{\gamma}
\newcommand{\gas}{\gamma_*}
\newcommand{\bs}{\backslash}
\def\beq{\begin{equation}}
\def\eeq{\end{equation}}
\def\beqs{\begin{equation*}}
\def\eeqs{\end{equation*}}
\begin{document}

\title{Gevrey regularity for integro-differential operators}

\author[1]{Guglielmo Albanese\thanks{guglielmo.albanese@unimi.it}}
\author[2]{Alessio Fiscella \thanks{fiscella@ime.unicamp.br}}
\author[1,3,4]{Enrico Valdinoci\thanks{Enrico.Valdinoci@wias-berlin.de}}

\affil[1]{\small Dipartimento di Matematica \emph{Federigo Enriques}, Universit\`{a} degli Studi di Milano, Via~Saldini~50, I-20133, Milano (Italy)}
\affil[2]{Departamento de Matem\'atica, Universidade Estadual de Campinas, IMECC,\\ Rua S\'ergio Buarque de Holanda, 651, SP CEP 13083-859, Campinas, Brazil}
\affil[3]{\small Weierstra{\ss} Institut f\"ur Angewandte Analysis und Stochastik,\\ Mohrenstra{\ss}e 39, D-10117, Berlin Germany}
\affil[4]{\small Istituto di Matematica Applicata e Tecnologie Informatiche, CNR,\\ Via Ferrata 1, I-27100, Pavia, Italy}

\renewcommand\Authands{, and }

\maketitle

\begin{abstract}
We prove for some singular kernels $K(x,y)$ that viscosity solutions of the integro-differential equation
	\beqs
	\int_{\R^n} \left[u(x+y)+u(x-y)-2u(x)\right]\,K(x,y)dy=f(x)
	\eeqs
locally belong to some Gevrey class if so does $f$. The fractional Laplacian equation is included in this framework as a special case.
\end{abstract}

\maketitle

\section{Introduction}

Recently, a great attention has been devoted to equations driven
by nonlocal operators of fractional type. {F}rom
the physical point of view, these
equations take into account long-range particle interactions 
with a power-law decay.
When the decay at infinity is sufficiently weak,
the long-range phenomena may prevail
and the nonlocal effects persist even on large scales
(see e.g. \cite{CaffaSou, PSV, Ovidiu}).

The probabilistic counterpart of these fractional equations
is that the underlying diffusion
is driven by a stochastic process with power-law tail probability
distribution (the so-called Pareto or L\'evy distribution),
see for instance \cite{1Viswana, walk}.
Since long relocations are allowed by the process,
the diffusion obtained is sometimes referred to with the name of
anomalous (in contrast with the classical one coming from
Poisson distributions). Physical realizations of these models
occur in different fields, such as fluid dynamics (and especially
quasi-geostrophic and water wave equations), dynamical
systems, elasticity and micelles, see, among the others
\cite{Solo93, Cordoba, Craig, 1Shlesi}. Also, the scale invariance of
the nonlocal probability distribution may combine with
the intermittency and renormalization properties of
other nonlinear dynamics and produce complex patterns
with fractional features. For instance, there are
indications that the distribution of food
on the ocean surface has scale invariant properties (see
see e.g. \cite{VISW} and references therein) and it is possible
that optimal searches of predators reflect these patterns
in the effort of locating abundant food in sparse environments,
also considering that power-law distribution of movements allow the
individuals to visit more sites than the classical Brownian situation
(see e.g. \cite{3Bart, HUMP}).

The regularity theory of integro-differential equations has been
extensively studied in continuous and smooth spaces, see e.g.
\cite{SilvTH, SilvANN, BFV, DCKP}. The purpose of this paper
is to deal with the regularity theory in a Gevrey
framework. The proof combines a quantitative bootstrap
argument developed in \cite{BFV}
and the classical iteration scheme of \cite{MN, Mo}. Here the bootstrap
argument is more delicate than in the classical case
due to the nonlocality of the operator, since the value of
the function in a small ball is affected by the values of the
function everywhere, not only in a slightly bigger ball;
in particular the derivatives of the function cannot be controlled
in the whole space and a suitable truncation argument is needed.

Before stating the main results of the paper, we recall the definition of Gevrey function. For a detailed treatment of the theory of Gevrey functions and their relation with analytic functions we refer to \cite{KP, Ro}.
Let $\Omega\in\R^n$ be an open set, we define for any fixed real number $\sigma\geq 1$ the class $\mathcal{G}^{\sigma}(\Omega)$ of Gevrey functions of order $\sigma$ in $\Omega$. This is the set of functions $f\in\mathcal{C}^{\infty}(\Omega)$ such that for every compact subset $\Theta$ of $\Omega$ there exist positive constants $V$ and $\Gamma$ such that for all $i\in\N$
		\beqs
		\linf{D^if}{\Theta}\leq V\, \Gamma^i\, (i!)^{\sigma}\, .
		\eeqs
We remark that the spaces $\Gev{\sigma}{\Omega}$ form a nested family, in the sense that $\Gev{\sigma}{\Omega}\subseteq\Gev{\tau}{\Omega}$ whenever $\sigma\leq\tau$ and furthemore the inclusion is strict whenever the inequality is. Clearly the class $\Gev{1}{\Omega}$ coincides with $\mathcal{C}^{\omega}(\Omega)$, that of analytical functions. It should be stressed that both the inclusions
	\beqs
	\begin{aligned}
	\mathcal{C}^{\omega}(\Omega)\subset\bigcap_{\sigma>1}\Gev{\sigma}{\Omega} & \quad\quad\quad & \bigcup_{\sigma\geq 1}\Gev{\sigma}{\Omega}\subset\mathcal{C}^{\infty}(\Omega)
	\end{aligned}
	\eeqs
are strict, see \cite{Ro}. 
The notion of Gevrey class of functions is quite useful in applications. For instance, it possess a nice characterization in Fourier spaces.
Moreover, cut-off functions are never analytic, but they may be chosen to belong to a Gevrey space. Roughly speaking, for a smooth function $f$ the notion of Gevrey order measures "how much" the Taylor series of $f$ diverges.

As in \cite{BFV} we consider a quite general kernel $K=K(x,y):\R^n\times\left(\R^n\backslash\left\{0\right\}\right)\rightarrow\left(0,+\infty\right)$ satisfying some structural assumptions. From now we assume that $s\in\left(1\slash 2, 1\right)$.

We suppose that $K$ is close to the kernel of the fractional Laplacian in the sense that
	\beq\label{K1}
	\begin{cases}
	\hbox{there exist $a_0, r_0$ and $\eta\in\left(0,a_0\slash 4\right)$ such that}\\
	\displaystyle\left|\frac{|y|^{n+2s}K(x,y)}{2-2s}-a_0\right|\leq\eta\quad\quad \hbox{for all $x\in B_1$, $y\in B_{r_0}\backslash\left\{0\right\}$}.
	\end{cases}
	\eeq
Since we are interested in the Gevrey regularity, in order to ensure that our solution are $\mathcal{C}^{\infty}$ we assume that $K\in\mathcal{C}^{\infty}\left(B_1\times\left(\R^n\backslash\left\{0\right\}\right)\right)$ and moreover
	\beq\label{K2}
	\begin{cases}
	\hbox{for all $k\in\N\cup\left\{0\right\}$ there exist $\mathrm{H}_k>0$ such that}\\
	\displaystyle\linfb{D_x^{\mu}D_y^{\theta}K(\cdot,y)}{1}\leq\frac{\mathrm{H}_k}{\left|y\right|^{n+2s+|\theta|}}\\
	\quad\quad\quad \hbox{for all $\mu, \theta\in\N^n$, $|\mu|+|\theta|=k$, $y\in B_{r_0}\backslash\left\{0\right\}$}.
	\end{cases}
	\eeq
Furthermore, since we need a quantitative asymptotic control on the tails of the derivatives of $K$, we assume that
	\beq\label{K3}
	\begin{cases}
	\hbox{there exist $\nu\geq 0$ and $\Lambda>0$ such that}\\
	\displaystyle\mathrm{H}_k\leq \Lambda^k\left(k!\right)^{\nu}\quad\quad \hbox{for all $k\in\N\cup\left\{0\right\}$}.
	\end{cases}
	\eeq

We adopt the following notation for the second increment
	\beqs
	\delta u(x,y)=u(x+y)+u(x-y)-2u(x).
	\eeqs

Our main result is about the Gevrey regularity of the integro-differential operators with a general kernel $K$.

\begin{thm}\label{A}
Suppose $f$ is in $\Gb{\tau}{6}$ and suppose that $u\in \mathrm{L}^{\infty}(\R^n)$ is a viscosity solution of 
	\beq\label{equ}
		\int_{\R^n} \delta u(x,y)\,K(x,y)dy=f(x) \quad\quad \hbox{inside $B_6$}
	\eeq
with $s\in\left({1}\slash{2},1\right)$. Assume that $K:B_1\times\left(\R^n\backslash\left\{0\right\}\right)\rightarrow (0,+\infty)$ satisfies assumptions (\ref{K1}), (\ref{K2}), and (\ref{K3}) for $\eta$ sufficiently small.\\ 
Then there exists a $0<\overline{R}<5$ which depends only on $n$, $f$, and $\linfr{u}$ such that $u\in\Gb{\sigma}{R}$ for each $\sigma\geq\max\{1+\nu,\tau\}$ and $R\leq\overline{R}$.
\end{thm}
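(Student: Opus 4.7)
The plan is to argue by induction on the order $k$ of the derivative, working on a shrinking family of nested balls $B_{R_k}$ inside an initial ball $B_{\overline{R}}\subset B_6$. The base case is the $\mathcal{C}^\infty$ regularity of $u$ on $B_{\overline{R}}$, which follows from the bootstrap scheme of \cite{BFV} applied to the smooth data $(f,K)$. The inductive target is
\begin{equation*}
\linfb{D^\mu u}{R_k}\le V\,\Gamma^k\,(k!)^\sigma\qquad\text{for every multi-index with }|\mu|=k,
\end{equation*}
where $V,\Gamma$ depend only on $n$, $f$, $\linfr{u}$ and on the structural constants in \eqref{K1}--\eqref{K3}; the threshold $\sigma\ge\max\{1+\nu,\tau\}$ is exactly the one that makes the bookkeeping below close.

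For the inductive step, I would differentiate the equation termwise. Setting $L_Ku(x):=\int_{\R^n}\delta u(x,y)K(x,y)\,dy$, the Leibniz rule yields
\begin{equation*}
L_K(D^\mu u)(x)=D^\mu f(x)-\sum_{\lambda<\mu}\binom{\mu}{\lambda}\int_{\R^n}\delta(D^\lambda u)(x,y)\,D_x^{\mu-\lambda}K(x,y)\,dy=:F_\mu(x),
\end{equation*}
and I would apply the quantitative $\mathcal{C}^{1,\alpha}$ estimate of \cite{BFV} to $v=D^\mu u$ viewed as a solution of $L_Kv=F_\mu$, gaining a derivative while shrinking from $B_{R_k}$ to $B_{R_{k+1}}$. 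Bounding the resulting right-hand side requires, on the one hand, the inductive hypothesis for $|\lambda|<k$, and on the other hand the estimates $|D_x^{\mu-\lambda}K(x,y)|\le\Lambda^{k-|\lambda|}((k-|\lambda|)!)^\nu|y|^{-n-2s}$ provided by \eqref{K2}--\eqref{K3} on $B_{r_0}$. The inductive step then reduces to a combinatorial inequality of the form
\begin{equation*}
\sum_{j=0}^{k-1}\binom{k}{j}(j!)^\sigma((k-j)!)^\nu\le C_0^k(k!)^\sigma,
\end{equation*}
which holds precisely for $\sigma\ge 1+\nu$, while the assumption $\sigma\ge\tau$ absorbs the contribution of $D^\mu f$.

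The main obstacle, in my view, is the truncation step needed to make the above rigorous. For $|y|>r_0$ the assumption \eqref{K2} gives no quantitative information on $K$, and, worse, $D^\lambda u$ is not defined outside $B_{\overline{R}}$; the right-hand side $F_\mu$ is thus only formal as written. To circumvent this I would multiply $u$ by a Gevrey cutoff $\eta_k\in\Gev{\sigma}{\R^n}$ with $\eta_k\equiv 1$ on $B_{R_k}$ and supported in $B_{R_{k-1}}$: such cutoffs exist precisely because $\sigma>1$, and their derivatives obey the very growth $\|D^j\eta_k\|_{\mathrm{L}^\infty}\le V_\eta\Gamma_\eta^j(j!)^\sigma$ that we are trying to prove for $u$, so the extra commutator terms produced by $\eta_k$ fit into the target bound. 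Over $\{|y|>r_0\}$ the resulting tail integrals are then controlled using only $\linfr{u}$ and the integrability of $|y|^{-n-2s}$ at infinity. Finally, in the \cite{BFV} estimate the constant $C(R_k-R_{k+1})$ must blow up at most polynomially (or of Gevrey rate $\sigma$) so that the iteration of \cite{MN,Mo} with the choice $R_k-R_{k+1}\sim 1/k$ does not destroy the bound; tracking this quantitative dependence together with the cutoff bookkeeping uniformly in $k$ is the delicate technical point.
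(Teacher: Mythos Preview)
Your outline shares the Morrey--Nirenberg philosophy with the paper, but the two arguments diverge exactly at the ``main obstacle'' you isolate, and the fix you propose does not close.

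The paper never uses the Leibniz expansion $L_K(D^\mu u)=D^\mu f-\sum_{\lambda<\mu}\binom{\mu}{\lambda}\int\delta(D^\lambda u)\,D_x^{\mu-\lambda}K$. Instead it works with the incremental quotient $w=h^{-(p+1)}T_h^{\gamma_*}u$ and a single \emph{fixed} $C^\infty$ cutoff $\eta$ (equal to $1$ on $B_4$, vanishing outside $B_5$, independent of $p$). The crucial device is a discrete integration by parts in the $y$-variable: in the region where $1-\eta\not\equiv0$ one rewrites
\[
\int (1-\eta)(x+y)\,T_h^{\gamma}u(x+y)\,K(x,y)\,dy
=\int (1-\eta)(x+y)\,u(x+y)\,T_{-h}^{\gamma}K(x,y)\,dy,
\]
so the full incremental quotient migrates from $u$ onto $K$ in the tail. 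After $h\to0$ this yields the term $\mathrm{H}_{p+1}2^p\delta^{-(p+2)}\linfr{u}$ of Lemma~\ref{hard}: only $\linfr{u}$ (no derivatives of $u$ outside a fixed ball) and a single $y$-derivative bound $\mathrm{H}_{p+1}$ on $K$ enter, and the cutoff contributes only through $\linfb{\nabla\eta}{5}\le2$. No Gevrey property of the cutoff is needed, and nothing about it depends on~$p$.

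Your alternative --- $p$-dependent Gevrey cutoffs $\eta_k$ --- has two gaps. First, when $\nu=0$ and $\tau=1$ the theorem claims the analytic case $\sigma=1$, in which compactly supported cutoffs do not exist; your scheme would yield only $u\in\bigcap_{\sigma>1}\Gev{\sigma}{R}$, which is strictly weaker. Second, a cutoff with $\eta_k\equiv1$ on $B_{R_k}$ and support in $B_{R_{k-1}}$ has Gevrey constants $V_\eta,\Gamma_\eta$ that scale like powers of $(R_{k-1}-R_k)^{-1}$; since the gaps must shrink (otherwise $R_k\to-\infty$), these constants blow up with $k$ and the commutator terms from $\eta_k$ cannot be absorbed into the target bound $V\Gamma^k(k!)^\sigma$ with $V,\Gamma$ independent of $k$. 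The discrete integration by parts is precisely what the paper uses to avoid both issues.

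A smaller structural difference: rather than a fixed nested sequence $B_{R_k}$ with $R_k-R_{k+1}\sim 1/k$ (which is not summable), the paper works with the weighted quantities $\mathrm{N}^*_{R,p}=\sup_{R/2<r<R}(R-r)^{p+2}\linfb{\nabla^{p+2}u}{r}$ and at each step sets $\delta=(R-r)/p$ inside the supremum. This is how the $1/p$ scale is made compatible with a fixed outer radius~$R$.
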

\begin{rmk}
We stress the fact that, as in \cite{BFV}, the kernel $K$ is not assumed to be symmetric in his entries, in particular, the cases considered here do not come necessarily from an extension problem, since they comprise rather general singular kernels. A delicate point is also the range of the constant $s$ appearing in the exponents, indeed in the paper it is assumed to belong to the interval $(1\slash 2,1)$ in order to exploit Theorem 5 of \cite{BFV} (in last analysis that range for $s$ descends from Theorem 61 of \cite{CS1}). It would be very interesting to develop a regularity theory also in the fractional case $s\in (0,1/2]$. Of course, the additional difficulty in this case is that the elliptic operator, at a first step, cannot regularize \emph{more than $2s$ derivatives}, and so not even the first derivative is under control. It is possible that some careful, intermediate H\"older bootstrap argument is needed to overcome this lack of initial regularity. We think that it would be interesting to analyze the special case of fractional Laplacian kernel where also the Caffarelli-Silvestre extension \cite{CSex} is at hand. It is an interesting problem to consider the regularity theory of boundary reactions of fractional type (with possible singular or degenerate elliptic operator), see e.g. \cite{SiV}.
\end{rmk} 
\begin{rmk}
It is important to stress the fact that, although Theorem \ref{A} seems to be purely local, the nonlocality of the operator gives also a result for the equation on a domain with a Dirichlet boundary condition. Indeed, if we consider a bounded solution of (\ref{equ}) on a bounded, smooth domain $\Omega$ such that $u=g$ on $\mathbb{R}^n\setminus\Omega$ with $g\in L^{\infty}(\mathbb{R}^n\setminus\Omega)$, then $u\in L^{\infty}(\mathbb{R}^n)$ and thus, for each point $x\in\Omega$ there exists a $r=r(x)>0$ such that we can apply the theorem to the ball of radius $r$ centered at $x$. It would be interesting to investigate whether the case of Dirichlet boundary conditions lead to a better regularity theory than the one in Theorem \ref{A}.
\end{rmk}
Furthermore we specialize the analysis to the case of the fractional Laplacian kernel and obtain the following

\begin{cor}\label{B}
Suppose $f$ is in $\Gb{\tau}{6}$ and suppose that $u\in \mathrm{L}^{\infty}(\R^n)$ is a viscosity solution of 
	\beq\label{equ2}
		\Laps u(x)=f(x) \quad\quad \hbox{inside $B_6$}
	\eeq
with $s\in\left({1}\slash{2},1\right)$.\\ 
Then there exists a $0<\overline{R}<5$ which depends only on $n$, $f$, and $\linfr{u}$ such that $u\in\Gb{\sigma}{R}$ for each $\sigma\geq\max\{2,\tau\}$ and $R\leq\overline{R}$.
\end{cor}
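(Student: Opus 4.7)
The plan is to derive Corollary~\ref{B} as a direct specialization of Theorem~\ref{A} by verifying that the fractional Laplacian kernel fits into the abstract framework with the explicit value $\nu=1$. Recall the pointwise representation
\beqs
\Laps u(x)=-\frac{c_{n,s}}{2}\int_{\R^n}\frac{u(x+y)+u(x-y)-2u(x)}{|y|^{n+2s}}\,dy
\eeqs
for a positive constant $c_{n,s}$. Setting $K(x,y):=c_{n,s}/(2|y|^{n+2s})$, equation \eqref{equ2} can be rewritten as $\int_{\R^n}\delta u(x,y)\,K(x,y)\,dy=-f(x)$ inside $B_6$, with $-f\in\Gb{\tau}{6}$. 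This puts us in the exact setting of Theorem~\ref{A}, so the whole task reduces to checking \eqref{K1}, \eqref{K2}, \eqref{K3} for this $K$.

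Assumption \eqref{K1} is immediate: the quantity $|y|^{n+2s}K(x,y)/(2-2s)=c_{n,s}/(2(2-2s))$ is a constant, so we may choose $a_0$ equal to this constant and then take $\eta>0$ to be as small as Theorem~\ref{A} requires (in particular $\eta<a_0/4$), with $r_0$ arbitrary. For \eqref{K2}, note that $K$ is independent of $x$, so only $D_y^{\theta}K$ can be nonzero. The main technical point is the bound
\beqs
\left|D_y^{\theta}|y|^{-n-2s}\right|\le C^{k}\,k!\,|y|^{-n-2s-k},\qquad |\theta|=k,
\eeqs
for some $C=C(n,s)>0$. This can be established by induction on $k$, using the Leibniz rule on $D_i(|y|^{-\alpha})=-\alpha y_i|y|^{-\alpha-2}$ and tracking the combinatorial factors, or equivalently via Fa\`a di Bruno applied to the composition $(|y|^2)^{-(n+2s)/2}$. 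Consequently \eqref{K2} holds with $\mathrm{H}_k=\tfrac{c_{n,s}}{2}C^{k}k!$, and \eqref{K3} then holds with $\nu=1$ and $\Lambda$ depending only on $n$ and $s$.

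Finally, plugging $\nu=1$ into Theorem~\ref{A} yields $u\in\Gb{\sigma}{R}$ for all $\sigma\ge\max\{1+\nu,\tau\}=\max\{2,\tau\}$ and all $R\le\overline{R}$, where $\overline{R}$ depends only on $n$, $f$, and $\linfr{u}$. This is precisely the statement of Corollary~\ref{B}. The only step that requires genuine care is the factorial estimate on the derivatives of $|y|^{-n-2s}$; everything else is a bookkeeping exercise that matches the fractional Laplacian kernel against the three structural hypotheses of Theorem~\ref{A}.
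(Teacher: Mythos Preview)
Your proof is correct and follows essentially the same route as the paper: verify that the fractional Laplacian kernel satisfies \eqref{K1}--\eqref{K3} with $\nu=1$, then invoke Theorem~\ref{A}. The only cosmetic difference is in how the factorial derivative bound is obtained: the paper observes that $|y|^{-n-2s}$ is real-analytic away from the origin (hence satisfies Cauchy-type estimates on the unit sphere) and then uses homogeneity of degree $-(n+2s)$ to scale, whereas you propose a direct induction or Fa\`a di Bruno computation; both yield the same $C^{k}\,k!\,|y|^{-n-2s-k}$ estimate.
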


We remark that most of the difficulties in our
problem come from having the equation in a domain
rather than in the whole of the space and from
the fact that we look
for local, rather than global, estimates.
For instance, if a summable $u$ satisfies
\beqs
(-\Delta)^su(x)+f(x)=0
\eeqs 
for every $x\in\R^n$
and $f$ belongs to some Gevrey class, then so is $u$, and this can be
proved directly via Fourier methods, see e.g Theorem 1.6.1 in \cite{Ro}. In concrete cases, for instance, when the equation is set in a ball, the solution can be explicitly found by integration against a suitable, analytic kernel (see e.g. \cite{Bu}) and so the regularity theory can follow from the explicit representation of the solution.

Concerning the Gevrey exponent~$\sigma=\max\{1+\nu,\tau\}$
in Theorem \ref{A}, we think that it is an interesting open problem
to establish whether or not such exponent is optimal or it can
be lowered, for instance, to the value~$\max\{\nu,\tau\}$
(in our case, the exponent value~$\sigma$
is due to a nonlocal boundary effect
and the worst factor comes from the higher
derivatives of the kernel).
\begin{rmk}
We think that an interesting problem is to determine also a Gevrey regularity theory for the powers of the Laplacian in the spectral sense (see for instance the very recent \cite{CaSt, Gr}). Smooth and analytic regularity theories in this framework have been previously considered in \cite{KN}.
\end{rmk}

\section{Incremental quotients}
In this section we recall some basic facts and results about incremental quotients. 

Given $k\in\N$,
we observe that, for any $a\in \R$,
there are exactly $k+1$ integers belonging to the interval $(a,a+k+1]$
(and, moreover, they are consecutive,
this can be easily proved by induction over $k$).
In particular, taking $a:=-(k+1)/2$, we have that
there are exactly $k+1$ integers belonging to the interval $(
-(k+1)/2 ,(k+1)/2]$. We call these integers $j_1,\dots,j_{k+1}$.

Now, we consider $V\in \,{\rm Mat}\,((k+1)\times(k+1))$
to be the matrix
$$ V_{m,i}:= {j_i}^{m-1}
{\mbox{ for any }}i ,m=1,\dots,k+1.$$
Then, we consider the vector $c^{(k)} 
=(c^{(k)}_1,\dots,c^{(k)}_{k+1})\in\R^{k+1}$
as the unique solution of
\beq\label{Van}
Vc=(0,\dots,0,k!).
\eeq
We remark that $V$ is invertible, being a Vandermonde matrix,
and therefore the definition of $c^{(k)}$ is well posed.

With this notation,
given $h>0$, $v \in {\mathbb S}^{n-1}$, and a function 
$u$, we consider the $h$-incremental quotient of $u$
of order $k$ in direction $v$, that is
\beq\label{not h}
T_h^v u(x):= \sum_{i=1}^{k+1} c^{(k)}_{i} u(x+j_i h 
v).
\eeq
We remark that
$ T_h^v u(x) / h^k$ behaves like
${D^k_v u(x)}$, as next result shows:

\begin{lem}\label{I}
Let $r>(k+1) h>0$ and $u\in \mathcal{C}^k (B_r( x))$. Then
$$ T_h^v u(x) = h^k {D^k_v u(x)} + o(h^{k}).$$  
\end{lem}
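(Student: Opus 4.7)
The plan is to Taylor-expand each of the shifted values $u(x+j_i h v)$ appearing in \eqref{not h} and then use the defining Vandermonde system \eqref{Van} of $c^{(k)}$ to see that every term of order strictly less than $h^k$ cancels, leaving precisely $h^k D_v^k u(x)$ plus a remainder that is $o(h^k)$.

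First I would verify that Taylor's formula is applicable at each of the points $x+j_i h v$. Since $j_1,\dots,j_{k+1}$ are the integers in $(-(k+1)/2,(k+1)/2]$ we have $|j_i|\leq (k+1)/2$, so the condition $(k+1)h<r$ guarantees $|j_i h v|<r$, i.e., $x+j_i h v\in B_r(x)$. The hypothesis $u\in\mathcal{C}^k(B_r(x))$ then yields the Taylor expansion with Peano remainder
\beqs
u(x+j_i h v)=\sum_{m=0}^{k}\frac{(j_i h)^m}{m!}\,D_v^m u(x)+\varphi_i(h),\qquad \varphi_i(h)=o(h^k)\ \hbox{as }h\to 0^+,
\eeqs
for each $i=1,\dots,k+1$.

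Next, I would plug this into the definition \eqref{not h} and swap the two finite sums, obtaining
\beqs
T_h^v u(x)=\sum_{m=0}^{k}\frac{h^m\,D_v^m u(x)}{m!}\left(\sum_{i=1}^{k+1}c_i^{(k)}\,j_i^m\right)+\sum_{i=1}^{k+1}c_i^{(k)}\,\varphi_i(h).
\eeqs
Because $V_{m+1,i}=j_i^m$, the inner bracket is exactly the $(m+1)$-st coordinate of $Vc^{(k)}$; by \eqref{Van} this coordinate vanishes for $m=0,\dots,k-1$ and equals $k!$ for $m=k$. Hence the double sum collapses to $\dfrac{h^k}{k!}\cdot k!\cdot D_v^k u(x)=h^k D_v^k u(x)$, while the remainder term is a fixed finite linear combination of $o(h^k)$ quantities, therefore itself $o(h^k)$.

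There is essentially no obstacle: the statement is precisely the algebraic motivation behind the definition of $c^{(k)}$, namely that \eqref{Van} is the unique linear system making the incremental combination \eqref{not h} kill the Taylor polynomial of $u$ at $x$ up to order $k-1$ and normalize the order-$k$ coefficient. The only subtlety is the use of the $\mathcal{C}^k$ (rather than $\mathcal{C}^{k+1}$) version of Taylor's theorem, which is the classical result producing a remainder of order $o(h^k)$.
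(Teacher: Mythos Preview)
Your argument is correct and follows the same strategy as the paper: Taylor-expand each shifted value and collapse the lower-order terms via the Vandermonde relation \eqref{Van}. The only difference is that the paper uses the Lagrange form of the remainder rather than the Peano form, which yields the explicit identity \eqref{remainder}; this explicit remainder is later exploited in the proof of Proposition~\ref{proint}, so be aware that the Peano version, while sufficient for the lemma itself, does not provide that byproduct.
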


\begin{proof} By Taylor's Theorem
$$ u(x+j_i h v) = \sum_{\ell=0}^{k-1} \frac{D^\ell_v 
u(x)}{\ell!}j_i^\ell h^\ell +
\frac{D^k_v 
u(x+\xi_i)}{k!}j_i^k h^k$$
for some $\xi_i\in\R^n$ with $|\xi_i|\le (k+1)h$.
Then we have
\begin{eqnarray*}
T_h^v u(x) &=& \sum_{i=1}^{k+1} 
\sum_{\ell=0}^{k-1} \frac{D^\ell_v
u(x)}{\ell!} c^{(k)}_{i} j_i^\ell h^\ell +
\sum_{i=1}^{k+1} c^{(k)}_{i}
\frac{D^k_v u(x+\xi_i)}{k!}j_i^k h^k
\\ &=& 
\sum_{i=1}^{k+1} 
\sum_{m=1}^{k} \frac{D^\ell_v
u(x)}{\ell!} c^{(k)}_{i} V_{m,i} h^{m-1} +
\sum_{i=1}^{k+1} c^{(k)}_{i}
\frac{D^k_v u(x+\xi_i)}{k!} V_{k+1,i} h^k
\\ &=&
\sum_{m=1}^{k} \frac{D^\ell_v
u(x)}{\ell!} (V c^{(k)})_m h^{m-1} +
(V c^{(k)})_{k+1}
\frac{D^k_v u(x)}{k!} h^k
\\ &&\quad+\sum_{i=1}^{k+1} c^{(k)}_{i}
\frac{D^k_v u(x+\xi_i)-D^k_v u(x)}{k!} V_{k+1,i} h^k
\end{eqnarray*}
This and \eqref{Van} imply 
\begin{equation}\label{remainder}
T_h^v u(x) =h^k
{D^k_v u(x)}
+h^k\sum_{i=1}^{k+1} c^{(k)}_{i}
\frac{D^k_v u(x+\xi_i)-D^k_v u(x)}{k!} V_{k+1,i}
\end{equation}   
and this gives
the desired results.
\end{proof}

We observe that $T_h^v(T_h^{\tilde v} u)=
T_h^{\tilde v}(T_h^vu)$, hence we can extend
the notation in \eqref{not h} to the multi-index case.
Namely, if $k=(k_1,\dots, k_\ell)\in \N^\ell$
and $v=(v_1,\dots,v_\ell)\in ({ \mathbb{S}}^{n-1})^\ell$, we define
\begin{eqnarray*}
&& T_h^v u(x):= T_h^{v_1}(\dots (T_h^{v_\ell} 
u(x))\dots)
\\ &&\quad=\sum_{i_1=1}^{k_1+1}\dots\sum_{i_\ell=1}^{k_\ell+1}
c^{(k_1)}_{i_1}\dots c^{(k_\ell)}_{i_\ell}
u(x+j_{i_1} h v_1+\dots+j_{i_\ell} h v_\ell)\end{eqnarray*}
and this definition is, in fact, independent of the order.
In this way, 
we have the following multi-index version of Lemma \ref{I}:

\begin{lem}\label{incr}
Let $r>(k+1) h>0$ and $u\in \mathcal{C}^k (B_r( x))$. Then
$$ T_h^v u(x) = h^{|k|} {D^k_v u(x)} + o(h^{|k|}).$$
\end{lem}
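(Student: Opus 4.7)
Proof plan: The plan is to expand $u$ directly in a Taylor series of order $|k|$ inside the explicit multi-sum formula for $T_h^v u(x)$, and then to exploit the factorization of the sum over the indices $(i_1, \dots, i_\ell)$ together with the Vandermonde identity \eqref{Van}. A naive induction on $\ell$ based on two successive applications of Lemma \ref{I} is tempting, but applying it to the outer factor $T_h^{v_\ell}u$ only produces an error of order $o(h^{k_1})$; sharpening this to $o(h^{|k|})$ would itself require an application of \eqref{Van} to improve the modulus of continuity of $D^{k'}_{v'}(T_h^{v_\ell}u)$, so the direct Taylor route is cleaner.

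Set $\mathbf{i} := (i_1, \dots, i_\ell)$ and $y_{\mathbf{i}} := j_{i_1} h v_1 + \dots + j_{i_\ell} h v_\ell$. Since $|j_{i_q}| \leq (k_q+1)/2$, one has $|y_{\mathbf{i}}| \leq C(k,\ell)\, h$ and the hypothesis on $r$ guarantees $x + y_{\mathbf{i}} \in B_r(x)$ for every $\mathbf{i}$. The Peano form of Taylor's theorem for $u \in \mathcal{C}^{|k|}(B_r(x))$ then gives
\beqs
u(x + y_{\mathbf{i}}) = \sum_{m=0}^{|k|} \frac{1}{m!} \bigl( y_{\mathbf{i}} \cdot \nabla \bigr)^m u(x) + \rho_{\mathbf{i}}(h),
\eeqs
with $\rho_{\mathbf{i}}(h) = o(|y_{\mathbf{i}}|^{|k|}) = o(h^{|k|})$ uniformly over the finitely many tuples $\mathbf{i}$. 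Expanding $(y_{\mathbf{i}} \cdot \nabla)^m$ by the multinomial theorem produces terms of the form $h^m \prod_{q=1}^{\ell} \bigl( j_{i_q}^{p_q}/p_q! \bigr) D^{p_q}_{v_q} u(x)$ with $\sum_q p_q = m$.

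Substituting into the defining formula for $T_h^v u(x)$, the outer multi-sum over $\mathbf{i}$ factorizes and each term picks up the coefficient $\prod_{q=1}^{\ell} \sum_{i_q} c^{(k_q)}_{i_q} j_{i_q}^{p_q}$. By \eqref{Van}, this factor vanishes whenever some $p_q < k_q$ and equals $\prod_q k_q!$ when $p_q = k_q$ for every $q$. Since the total degree satisfies $\sum_q p_q \leq |k| = \sum_q k_q$, the only surviving configuration is $p_q = k_q$ for all $q$, contributing precisely $h^{|k|} D^{k_1}_{v_1} \cdots D^{k_\ell}_{v_\ell} u(x) = h^{|k|} D^k_v u(x)$. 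The remaining pieces are the aggregated Taylor remainders, a finite sum of terms that are each $o(h^{|k|})$, hence $o(h^{|k|})$ in total. The one real bookkeeping point — the vanishing of all lower-order polynomial contributions — is just the Vandermonde cancellation \eqref{Van} applied separately in each direction $q$, made available precisely by the factorization of the multi-sum.
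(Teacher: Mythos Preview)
Your argument is correct. The factorization of the multi-sum combined with the Vandermonde relations \eqref{Van} kills every monomial except the one with $p_q=k_q$ for all $q$: indeed, if $\sum_q p_q\le |k|$ and $(p_1,\dots,p_\ell)\neq(k_1,\dots,k_\ell)$, then some $p_q<k_q$ must occur (otherwise $\sum p_q\ge\sum k_q=|k|$ forces equality in every coordinate), and that single vanishing factor annihilates the product. The Peano remainder and the finiteness of the index set give the $o(h^{|k|})$ control.

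As for comparison with the paper: the paper does not actually supply a proof of Lemma~\ref{incr}; it merely states the result as the multi-index analogue of Lemma~\ref{I}, the implicit suggestion being an iteration of that lemma. You correctly identify the nuisance in the naive iteration, namely that the outer application of Lemma~\ref{I} leaves a remainder of size $o(h^{k_1})$ rather than $o(h^{|k|})$, and that closing this gap requires feeding the Vandermonde cancellation back into the remainder term (essentially via~\eqref{remainder} applied to $D^{k_1}_{v_1}w$ with $w=T_h^{v_2}u$). Your direct Taylor expansion bypasses this bookkeeping and is the cleaner route.
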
 

To prove Theorem \ref{A} we will need the following integral analogue of Lemma \ref{incr}. From now on, we will adopt the convention that for any kernel $K(x,y)$
	\beqs
	T_{h}^{\ga}K(x,y)
	\eeqs
denotes the incremental quotient with respect to $y$ (i.e. letting fixed $x$).
	\begin{pro}\label{proint}
	Let $0<\ep<r$ and $K\in\mathcal{C}^{\infty}(B_1\times\R^n\bs B_{r-\ep})$ satisfying condition (\ref{K2}).
	Then for each $\ga\in\N^n$
		\beqs
		\lim_{h\rightarrow 0}\underset{r<|y|}{\int}\frac{1}{h^{|\ga|}}\left|T_{h}^{\ga}K(x,y)\right|dy\,=\,\underset{r<|y|}{\int}\left|D_y^{\ga}K(x,y)\right|dy\, .
		\eeqs 
	\end{pro}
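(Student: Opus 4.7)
The plan is to establish the identity by Lebesgue's dominated convergence theorem applied to the integrand $h^{-|\ga|}|T_{h}^{\ga}K(x,y)|$ on $\{|y|>r\}$. Two ingredients are needed: pointwise convergence of the integrands to $|D_y^{\ga}K(x,y)|$, and a uniform (in $h$) integrable majorant.

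The pointwise convergence is an immediate consequence of Lemma \ref{incr}. Fix $y_0$ with $|y_0|>r$ and restrict attention to $h$ with $(|\ga|+1)h<\ep/2$; then every translate of $y_0$ appearing in the definition of $T_{h}^{\ga}K(x,y_0)$ lies in $\R^n\setminus B_{r-\ep}$, where $K(x,\cdot)$ is smooth by assumption. Applying the multi-index version of Lemma \ref{incr} to $z\mapsto K(x,z)$ on $B_{(|\ga|+1)h}(y_0)$ yields $h^{-|\ga|}T_{h}^{\ga}K(x,y_0)\to D_y^{\ga}K(x,y_0)$, whence also the convergence of the absolute values.

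For the majorant I would revisit the computation in the proof of Lemma \ref{I}. The Vandermonde identity $\sum_{i}c_i^{(k)}j_i^m=0$ for $0\le m\le k-1$ kills the order-$(k-1)$ Taylor polynomial of $f$ inside $\sum_i c_i^{(k)}f(z+j_i h)$, so Taylor's formula with Lagrange remainder gives $\bigl|\sum_i c_i^{(k)}f(z+j_i h)\bigr|\le C_k\, h^k\sup_{|\xi|\le(k+1)h}|f^{(k)}(z+\xi)|$. Iterating this one-variable estimate over the $n$ coordinate directions one obtains
\beqs
\left|\frac{T_{h}^{\ga}K(x,y)}{h^{|\ga|}}\right|\le C(\ga)\sup_{|\xi|\le(|\ga|+1)h}|D_y^{\ga}K(x,y+\xi)|
\eeqs
with $C(\ga)$ depending only on $\ga$ and the coefficients $c^{(k_i)}$. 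Fixing any $h_0\in(0,\ep/(2(|\ga|+1)))$, the function $g(y):=C(\ga)\sup_{|\xi|\le(|\ga|+1)h_0}|D_y^{\ga}K(x,y+\xi)|$ then dominates the integrand uniformly for $h\in(0,h_0]$. After shrinking $h_0$ so that $|y+\xi|\ge|y|/2$ on the range of interest, hypothesis (\ref{K2}) controls $g$ by a constant multiple of $\mathrm{H}_{|\ga|}\,|y|^{-n-2s-|\ga|}$ on the annular portion $\{r<|y|\}\cap B_{r_0}$, while on the complement the smoothness of $K$ on $\R^n\setminus B_{r-\ep}$ and the standing tail behaviour of $K$ make $g$ integrable. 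Dominated convergence then closes the argument.

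The main subtle point is precisely the construction of $g$: to keep $h^{-|\ga|}|T_h^{\ga}K|$ uniformly bounded as $h\to 0$ one must exploit the Vandermonde cancellation of the low-order Taylor polynomial, since the naive bound $|T_h^{\ga}K|\le C\linfr{K(x,\cdot)}$ blows up after division by $h^{|\ga|}$. Once that cancellation is in hand, the integrability of $g$ at infinity is a straightforward check from the decay assumptions on $K$.
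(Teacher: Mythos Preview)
Your proposal is correct and follows essentially the same approach as the paper: dominated convergence, with pointwise convergence from Lemma~\ref{incr} and the integrable majorant built from the Taylor remainder (formula~\eqref{remainder}) combined with hypothesis~\eqref{K2}. The only cosmetic difference is that the paper writes the majorant as a finite sum $A\,|D_y^{\ga}K(x,y)|+\sum_i A_i\,|D_y^{\ga}K(x,y+\xi_i)|$ and bounds it by $\mathrm{H}_{|\ga|}(|y|-\ep)^{-n-2s-|\ga|}$, whereas you take a supremum over small translates and use $|y|/2$ in place of $|y|-\ep$.
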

		\begin{proof}
		For $\ep>h(|\ga|+1)>0$ we set 
		\beqs
		f_h(x,y)=\frac{1}{h^{|\ga|}}\left|T_h^{\ga}K(x,y)\right|\, ,
		\eeqs
		it follows from Lemma \ref{I} that $f_h(x,y)$ converges pointwise in $\Omega$ to 
		\beqs
		f(x,y)=\left|D_y^{\ga}K(x,y)\right|\, .
		\eeqs 
		The idea is to show that there exists a $g\in L^1(\Omega)$ such that $f_h(y)\leq g(y)$. From (\ref{remainder}) it follows that there exists $\xi_i\in\R^n$ with $|\xi_i|\leq h(|\ga|+1)$ and positive constants $A$ and $A_i$ such that
			\beqs
				\begin{aligned}
				f_h(x,y) & \leq |D_y^{\ga} K(x,y)| +\sum_{i=1}^{|\ga|+1} c^{(|\ga|)}_{i} \frac{\left|D_y^{\ga} K(x,y+\xi_i)-D_y^{\ga} K(x,y)\right|}{|\ga|!} V_{|\ga|+1,i}\\
				&\leq A \left|D_y^{\ga} K(x,y)\right| + \sum_{i=1}^{|\ga|+1} A_i \left|D_y^{\ga} K(x,y+\xi_i)\right|\, .
				\end{aligned}
			\eeqs
		From condition (\ref{K2}) and $\ep<r$ we have 
			\beqs
			\begin{aligned}
			\left|D_y^{\ga} K(x,y+\xi_i)\right| & \leq \frac{\mathrm{H}_{\left|\ga\right|}}{\left|y+\xi_i\right|^{n+2s+\left|\ga\right|}}\\
			& \leq \frac{\mathrm{H}_{\left|\ga\right|}}{\left(\left|y\right|-\left|\xi_i\right|\right)^{n+2s+\left|\ga\right|}}\\
			& \leq \frac{\mathrm{H}_{\left|\ga\right|}}{\left(\left|y\right|-\ep\right)^{n+2s+\left|\ga\right|}}\, .	
			\end{aligned}
			\eeqs
		
		Since this last estimate is independent of $h$, the proposition follows from the Lebesgue dominated convergence theorem.
		\end{proof} 

\section{An a priori estimate}
In this section we deduce the following \emph{a priori} estimate of Friedrichs type for solutions of (\ref{equ}). It will be the key tool in proving the Gevrey regularity of the solutions of the equation.

\begin{lem}\label{hard}
Let $u$ be a solution of (\ref{equ}) with $s\in\left(\frac{1}{2},1\right)$ and $f\in\mathcal{C}^{\infty}(B_6)$. Then for any $0<r<r+\delta<5$ and $p\geq 0$ the following estimate holds
	\beqs
		\begin{aligned}
		\linfb{\nabla^{p+2}u}{r} & \leq C\left[\frac{1}{\delta}\linfb{\nabla^{p+1}u}{r+\delta} + \frac{1}{\delta^2} \linfb{\nabla^{p}u}{r+\delta}\right.\\
		& \left. \quad\quad + \delta^{2s-1}\linfb{\nabla^{p+1}f}{r+\delta} + \frac{\mathrm{H}_{p+1}2^p}{\delta^{p+2}}\linfr{u} \right],
		\end{aligned}
	\eeqs
	where $C$ is a constant depending only on $n$ and $s$.
\end{lem}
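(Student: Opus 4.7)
The plan is to apply the basic $C^{1,\alpha}$ Schauder-type estimate of \cite{BFV} (specifically Theorem 5 there, available because $s>1/2$) to a $(p+1)$-fold $x$-derivative of $u$. That estimate, rescaled to balls of radius $\delta$ and applied to a function $v$ solving $\int_{\R^n}\delta v(x,y)K(x,y)\,dy=g(x)$ on $B_{r+\delta}$, controls $\|\nabla v\|_{L^\infty(B_r)}$ by $\delta^{-1}\|v\|_{L^\infty(B_{r+\delta})}+\delta^{2s-1}\|g\|_{L^\infty(B_{r+\delta})}$ plus a non-local tail that measures the size of $v$ outside $B_{r+\delta}$. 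Since viscosity solutions are a priori not smooth, the differentiation is carried out rigorously through the incremental-quotient operators $T_h^\gamma$ of Section~2, and one passes to the limit $h\to 0$ via Lemma~\ref{incr} and Proposition~\ref{proint}.

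Concretely, fix a multi-index $\gamma$ with $|\gamma|=p+1$ and apply $D^\gamma$ to~(\ref{equ}). Using the Leibniz rule on the $x$-dependent kernel, $v:=D^\gamma u$ satisfies on $B_{r+\delta}$
\beqs
\int_{\R^n}\delta v(x,y)\,K(x,y)\,dy \;=\; D^\gamma f(x)-R_\gamma(x),
\eeqs
with commutator
\beqs
R_\gamma(x)\;=\;\sum_{\substack{\alpha+\beta=\gamma\\|\beta|\ge 1}}\binom{\gamma}{\beta}\int_{\R^n}\delta(D^\alpha u)(x,y)\,D_x^\beta K(x,y)\,dy.
\eeqs
The rescaled BFV estimate then produces directly the first term $\delta^{-1}\|\nabla^{p+1}u\|_{L^\infty(B_{r+\delta})}$ and the source term $\delta^{2s-1}\|\nabla^{p+1}f\|_{L^\infty(B_{r+\delta})}$ in the desired bound, reducing the remaining work to controlling $\delta^{2s-1}\|R_\gamma\|_{L^\infty(B_{r+\delta})}$ and the non-local tail of $v$.

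The commutator $R_\gamma$ is estimated by splitting each of its integrals at $|y|=\delta$. On $\{|y|<\delta\}$ one uses the Taylor bound $|\delta(D^\alpha u)(x,y)|\le|y|^{2}\|D^{|\alpha|+2}u\|_{L^\infty(B_{r+\delta})}$ together with~(\ref{K2}), giving a factor $H_{|\beta|}\delta^{2-2s}\|D^{|\alpha|+2}u\|$; for the top-order case $|\alpha|=p$, $|\beta|=1$ the resulting contribution is $\delta\|\nabla^{p+2}u\|_{L^\infty(B_{r+\delta})}H_1$, absorbable into the left-hand side by smallness of $\delta$ (using $s>1/2$), while lower-order cases reduce via convex interpolation to the two terms $\delta^{-1}\|\nabla^{p+1}u\|_{L^\infty(B_{r+\delta})}$ and $\delta^{-2}\|\nabla^{p}u\|_{L^\infty(B_{r+\delta})}$ of the claim. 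On $\{|y|>\delta\}$, when $x\pm y$ leaves $B_{r+\delta}$ only the global norm $\|u\|_{L^\infty(\R^n)}$ is available; moving the $|\alpha|$ $x$-derivatives on $u$ onto $K$ via $\partial_{x_i}u(x\pm y)=\pm\partial_{y_i}u(x\pm y)$ and integrating by parts in $y$, the worst commutator term $|\alpha|=0$, $|\beta|=p+1$ yields a contribution of order $H_{p+1}\delta^{-2s}\|u\|_{L^\infty(\R^n)}$.

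The main obstacle is controlling the non-local tail of $v=D^{p+1}u$, which is not globally bounded. To handle it one introduces a cutoff $\chi\in\mathcal{C}^\infty_c(B_{r+\delta})$ with $\chi\equiv 1$ on a ball slightly larger than $B_r$ and $\|D^j\chi\|_{L^\infty(\R^n)}\le C\delta^{-j}$ for $j\le p+2$, and applies the Schauder estimate to a truncated version of $v$. Using the identity $\partial_{x_i}u(x\pm y)=\pm\partial_{y_i}u(x\pm y)$ together with integration by parts in $y$, the $p+1$ $x$-derivatives on $u$ are pushed onto $K$ and $\chi$, producing a tail bounded by $H_{p+1}\cdot\delta^{-(p+1)}\|u\|_{L^\infty(\R^n)}$; multiplied by the Schauder scaling $\delta^{-1}$, this is the origin of the final $H_{p+1}2^p\delta^{-(p+2)}\|u\|_{L^\infty(\R^n)}$ term, with $2^p$ absorbing the binomial sum $\sum_\beta\binom{p+1}{\beta}$. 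The delicate point is that the cutoff perturbs the nonlocal equation and one must carefully track the resulting error terms, showing each fits into the form of the claimed estimate; this is the "suitable truncation argument" emphasized in the introduction and is the principal technical novelty compared to the classical local theory.
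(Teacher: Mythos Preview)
Your approach is structurally different from the paper's and contains a genuine gap.

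\textbf{The paper's route.} The paper works at the fixed unit scale with a cutoff $\eta$ supported in $B_5$, $\eta\equiv 1$ on $B_4$, and obtains the general $r,\delta$ estimate only at the end by rescaling and covering. Crucially, the cutoff is applied only \emph{after} the $p$-th order incremental quotient: with $|\gamma|=p$ one writes $w_1=h^{-(p+1)}T_h^{e_i}\bigl(\eta\,T_h^{\gamma}u\bigr)$ and $w_2=h^{-(p+1)}T_h^{e_i}\bigl((1-\eta)\,T_h^{\gamma}u\bigr)$. Because $\eta$ is hit by only the single incremental quotient $T_h^{e_i}$, the discrete Leibniz rule produces just one commutator term $(T_h^{e_i}\eta)(T_h^{\gamma}u)$, which yields the $\|\nabla^{p}u\|$ contribution. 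Since $w_2\equiv 0$ on $B_1$, the quantity $\mathcal{I}_K(w_2)$ has no singular part and is handled by one (in the annulus $4<|x\pm y|<5$) or $p+1$ (outside $B_5$) discrete integrations by parts in $y$; only $\mathrm{H}_1$ and $\mathrm{H}_{p+1}$ ever appear. Then Theorem~61 of \cite{CS1} is applied to $w_1$ and one passes to the limit $h\to 0$.

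\textbf{Your route and its gap.} You instead expand the full $(p+1)$-th derivative of the equation by Leibniz, producing commutator terms $R_\gamma$ with $D_x^\beta K$ for every $1\le |\beta|\le p+1$ and involving all the constants $\mathrm{H}_j$ and all intermediate norms $\|\nabla^{j}u\|$, $0\le j\le p+2$. Reducing these to only $\|\nabla^{p}u\|$ and $\|\nabla^{p+1}u\|$ via ``convex interpolation'' with a constant independent of $p$ is asserted but not justified; since the final constant $C$ in the Lemma must not depend on $p$, this is a real issue. The more serious gap is your absorption step: the top-order commutator contribution you identify is of the form $\delta\,\mathrm{H}_1\,\|\nabla^{p+2}u\|_{L^{\infty}(B_{r+\delta})}$, while the left-hand side is $\|\nabla^{p+2}u\|_{L^{\infty}(B_r)}$. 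These live on \emph{different} balls, so smallness of $\delta$ alone does not allow absorption; one would need an additional iteration-over-radii argument, which you do not supply and which must again yield a $p$-independent constant. The paper's organization sidesteps both difficulties: no top-order term appears on the right and no interpolation over intermediate orders is needed.
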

We note that the estimate above corresponds to Lemma 5.7.1 in \cite{Mo}. It should be stressed that while in the local case the estimate for a generic $p$ is easily deduced from the estimate for $p=0$ by differentiating the equation, in the nonlocal case we have to be more accurate in order to take into account the long range interaction. In particular to obtain this estimate we will exploit the bootstrap machinery developed in the \cite{BFV} to prove the $\mathcal{C}^{\infty}$ regularity of solutions.
 
In the rest of the paper we will adopt the following notation in order to keep the exposition clear. For $\Omega\in\R^n$, $u$ a measurable function, and $K$ a kernel as in Theorem \ref{A}
	\beqs
	\ke{u}=\int_{\R^n} \delta u(x,y)\,K(x,y)dy\,.
	\eeqs 
\begin{proof}
First of all we recall that by Theorem 5 in \cite{BFV} viscosity solutions of (\ref{equ}) are of class $\mathcal{C}^{\infty}$. Now we choose a $\mathcal{C}^{\infty}$ cutoff function $\eta(x)\geq 0$ such that
	\beqs
	\eta(x)=
		\begin{cases}
		1 & B_{4}\\
		0 & \R^n\bs B_{5}\\
		\end{cases}
	\eeqs
and 
	\beq\label{Deta}
	\linfb{\nabla\eta(x)}{5}\leq 2\, .
	\eeq
Now for $\ga\in\N^n$ with $|\ga|=p$	and $e_i$ a unit vector we set $\gas=\ga+e_i$ and define for each $0<h<\frac{1}{p+2}$
	\beqs
	w(x)=\frac{1}{h^{p+1}}T^{\gas}_hu(x)\, .
	\eeqs
By linearity $w(x)=w_1(x)+w_2(x)$ where
	\beqs
	w_1(x)=\frac{1}{h^{p+1}}T_h^{e_i}\left(\eta\ T^{\ga}_hu(x)\right)\, ,
	\eeqs
and
	\beqs
	w_2(x)=\frac{1}{h^{p+1}}T_h^{e_i}\left((1-\eta)\ T^{\ga}_hu(x)\right).
	\eeqs
The strategy is to get a good estimate of $\left|\ke{w_1}\right|$ inside $B_1$ using the technique in \cite{BFV}. We stress the fact that $w_2(x)\equiv 0$ for $x\in B_1$ since $\eta\equiv 1$ there.\\
From the triangle inequality we have that
 
	\beq\label{uuu}
		\begin{aligned}
		\left|\ke{w_1}\right| & = \left|\ke{w} - \ke{w_2} \right|\\
		& \leq \left|\ke{w}\right|+\left|\ke{w_2}\right|\\
		& = \left|\frac{1}{h^{p+1}}T^{\gas}_hf(x)\right|+\left|\ke{w_2} \right|\, .
		\end{aligned}
	\eeq
Using the discrete integration by parts and recalling that $x\in B_1$ we get the inequality	
	\beqs
		\begin{aligned}
		\left|\ke{w_2}\right| & = \left|\int_{\R^n}\left[ w_2(x+y)+w_2(x-y)-2w_2(x)\right]\ K(x,y)\ dy \right|\\
		& = \left|\int_{\R^n}\left[ w_2(x+y)+w_2(x-y)\right]\ K(x,y)\ dy \right|\\
		& \leq \left|\int_{\R^n} w_2(x+y)\ K(x,y)\ dy \right| + \left|\int_{\R^n} w_2(x-y)\ K(x,y)\ dy \right|\\
		& = \left|\int_{\R^n}\left[ \frac{1}{h^{p+1}}T_h^{e_i}\left((1-\eta)\ T^{\ga}_hu(x+y)\right)\right]\ K(x,y)\ dy \right|\\
		& \quad\quad\quad+ \left|\int_{\R^n}\left[ \frac{1}{h^{p+1}}T_h^{e_i}\left((1-\eta)\ T^{\ga}_hu(x-y)\right)\right]\ K(x,y)\ dy \right|\\
		& = \left|\int_{\R^n}\left[ \frac{1}{h^{p+1}}\left((1-\eta)\ T^{\ga}_hu(x+y)\right)\right]\ T_{-h}^{e_i}K(x,y)\ dy \right|\\
		& \quad\quad\quad+ \left|\int_{\R^n}\left[ \frac{1}{h^{p+1}}\left((1-\eta)\ T^{\ga}_hu(x-y)\right)\right]\ T_{-h}^{e_i}K(x,y)\ dy \right|\, .
		\end{aligned}
	\eeqs
Now, exploiting the properties of the cutoff function $\eta$ we get the following estimate
	\beqs
		\begin{aligned}
		\left|\ke{w_2}\right| & \leq \left|\underset{4<|x+y|<5}{\int}\left[ \frac{1}{h^{p+1}}\left((1-\eta)\ T^{\ga}_hu(x+y)\right)\right]\ T_{-h}^{e_i}K(x,y)\ dy \right|\\
		& \quad\quad\quad+ \left|\underset{4<|x-y|<5}{\int}\left[ \frac{1}{h^{p+1}}\left((1-\eta)\ T^{\ga}_hu(x-y)\right)\right]\ T_{-h}^{e_i}K(x,y)\ dy \right|\\
		& \quad\quad\quad\quad\quad\quad+ \left|\underset{5<|x+y|}{\int}\frac{1}{h^{p+1}}u(x+y)\ T_{-h}^{\gas}K(x,y)\ dy \right| \\
		& \quad\quad\quad\quad\quad\quad\quad\quad\quad+ \left|\underset{5<|x-y|}{\int}\frac{1}{h^{p+1}}u(x-y)\ T_{-h}^{\gas}K(x,y)\ dy \right|\, .
		\end{aligned}
	\eeqs
Using the triangle inequality we conclude that	
	\beqs
		\begin{aligned}		
		\left|\ke{w_2}\right| & \leq \left|\underset{\substack{2<|y| \\ |x+y|<5}}{\int} \left[ \frac{1}{h^{p+1}}\left((1-\eta)\ T^{\ga}_hu(x+y)\right)\right]\ T_{-h}^{e_i}K(x,y)\ dy \right|\\
		& \quad\quad\quad+ \left|\underset{\substack{2<|y| \\ |x-y|<5}}{\int}\left[ \frac{1}{h^{p+1}}\left((1-\eta)\ T^{\ga}_hu(x-y)\right)\right]\ T_{-h}^{e_i}K(x,y)\ dy \right|\\
		& \quad\quad\quad\quad\quad\quad+ \left|\underset{3<|y|}{\int}\frac{1}{h^{p+1}}u(x+y)\ T_{-h}^{\gas}K(x,y)\ dy \right|\\
		& \quad\quad\quad\quad\quad\quad+ \left|\underset{3<|y|}{\int}\frac{1}{h^{p+1}}u(x-y)\ T_{-h}^{\gas}K(x,y)\ dy \right|
		\end{aligned}
	\eeqs
and the RHS is less than or equal to
	\beqs
	2\linfb{\frac{1}{h^{p}}T^{\ga}_hu}{5}  \underset{2<|y|}{\int}\frac{1}{h}\ \left|T_{-h}^{e_i}K(x,y) \right|\ dy + 2\linfr{u} \underset{3<|y|}{\int}\frac{1}{h^{p+1}}\ \left|T_{-h}^{\gas}K(x,y)\right|\ dy\, .
	\eeqs
Inserting the inequality above into (\ref{uuu}) we obtain the desired estimate inside the ball of radius 1
	\beq\label{vvv}
	\begin{aligned}
	\left|\ke{w_1}\right| & \leq  \left|\frac{1}{h^{p+1}}T^{\gas}_hf(x)\right| + 2\linfr{u} \underset{3<|y|}{\int}\frac{1}{h^{p+1}}\ \left|T_{-h}^{\gas}K(x,y)\right|\ dy \\ 
	& \quad\quad\quad  + 2\linfb{\frac{1}{h^{p}}T^{\ga}_hu}{5}  \underset{2<|y|}{\int}\frac{1}{h}\ \left|T_{-h}^{e_i}K(x,y) \right|\ dy\, .
	\end{aligned}
	\eeq
Furthermore, thanks to the discrete Leibnitz rule we have that
	\beqs
	w_1(x)=\frac{1}{h^{p+1}}\left[\eta(x+he_i)T^{\gas}_hu(x)+T_h^{e_i}\eta(x)T_h^{\ga}u(x) \right]
	\eeqs
and this implies 
	\beq\label{w1inf}
	\linfr{w_1}\leq\linf{\frac{1}{h^{p+1}}T^{\gas}_hu}{B_5(he_i)}+\linf{\frac{1}{h}T_h^{e_i}\eta}{B_5}\linf{\frac{1}{h^{p}}T^{\ga}_hu}{B_5}
	\eeq
where $B_5(he_i)$ is the ball of radius $5$ centered at $he_i$.\\
Thanks to (\ref{vvv}), (\ref{w1inf}), and Theorem 61 in \cite{CS1} we get the following estimate for $w_1$
	\beqs
	\begin{aligned}
	\calpha{w_1}{1} & \leq C\left(\linfr{w_1}+\linfb{\ke{w_1}}{2}\right)\\
	& \leq C\left( \linf{w}{B_5(he_i)}+\linf{\frac{1}{h}T_h^{e_i}\eta}{B_5}\linf{\frac{1}{h^{p}}T^{\ga}_hu}{B_5}+\linfb{\frac{1}{h^{p+1}}T^{\gas}_hf}{2} \right. \\ 
	& \quad\quad\left.+ 2\linfb{\frac{1}{h^{p}}T^{\ga}_hu}{5}  \underset{2<|y|}{\int}\frac{1}{h}\ \left|T_{-h}^{e_i}K(x,y) \right|\ dy\right.\\ 
	& \quad\quad\left.+ 2\linfr{u} \underset{3<|y|}{\int}\frac{1}{h^{p+1}}\ \left|T_{-h}^{\gas}K(x,y)\right|\ dy \right)\, ,
	\end{aligned}
	\eeqs
where, from now on, $C$ will denote a positive constant depending only on $n$ and $s$. This means that in calculations we will reabsorb constant factors inside $C$. Letting $h\rightarrow 0$, from (\ref{K2}), Proposition \ref{proint}, and (\ref{Deta}) we conclude that
	\beq\label{est1}
	\begin{aligned}
		\calpha{D^{\gas}u}{1} & \leq C\left( \linfb{D^{\gas}u}{5}+2\linfb{D^{\ga}u}{5}+ \left\|D^{\gas}f\right\|_{\mathrm{L}^{\infty}(B_2)}\right. \\
		& \quad\quad \left.+ \mathrm{H}_1\linfb{D^{\ga}u}{5} + \frac{\mathrm{H}_{p+1}}{2^{2s+p+1}}\linfr{u} \right)\, .
	\end{aligned}
	\eeq
By a scaling argument, from (\ref{est1}) we get the following estimate valid for any $\sigma\in (0,1]$
	\beqs
		\begin{aligned}
		\linfb{\nabla^{p+2}u}{\sigma} & \leq C\left[\frac{1}{\sigma}\linfb{\nabla^{p+1}u}{5\sigma} +	\frac{1}{\sigma^2} \linfb{\nabla^{p}u}{5\sigma}\right.\\
		& \left. \quad\quad\quad + \sigma^{2s-1}\linfb{\nabla^{p+1}f}{2\sigma } + \frac{\mathrm{H}_{p+1}}{2^p}\frac{1}{\sigma^{p+2}}\linfr{u} \right]
		\end{aligned}
	\eeqs
this implies, by covering, the following global estimate, valid for $0<r<r+\delta\leq 5$
	\beqs
		\begin{aligned}
		\linfb{\nabla^{p+2}u}{r} & \leq C\left[\frac{1}{\delta}\linfb{\nabla^{p+1}u}{r+\delta}+ \frac{1}{\delta^2} \linfb{\nabla^{p}u}{r+\delta}\right.\\
		& \left. \quad\quad + \delta^{2s-1}\linfb{\nabla^{p+1}f}{r+\delta} + \frac{\mathrm{H}_{p+1}\,2^p}{\delta^{p+2}}\linfr{u} \right].
		\end{aligned}
	\eeqs
which is the desired estimate.
\end{proof}
	
\section{Proof of Theorem 1.1}
In this section we will adapt the iteration scheme in \cite{MN, Mo} to obtain Gevrey regularity of solutions of (\ref{equ}). The main idea here is to introduce some rescaled companions of the $\mathrm{L}^{\infty}$ norms in order to exploit effectively the estimate of Lemma \ref{hard}.\\

As in \cite{Mo, MN} we introduce the following.\\ 
For $f$ and $u$ in $\Cinfb{R}$, we define the quantities
	\beqs
		\begin{aligned}
		\mathrm{M}^s_{R,p}(f) & =\sup_{R\slash 2<r<R}(R-r)^{2s+p+1}\linfb{\nabla^{p+1}f}{r}, & & p\in\N\cup\left\{0\right\}\\
		\mathrm{N}^*_{R,p}(u) & =\sup_{R\slash 2<r<R}(R-r)^{p+2}\linfb{\nabla^{p+2}u}{r}, & & p\in\left\{-2,-1\right\}.
		\end{aligned}
	\eeqs
Since there is no ambiguity, in the rest of the paper will be suppressed the explicit dependence from $f$ and $u$ in $\mathrm{M}^s_{R,p}$ and $\mathrm{N}^*_{R,p}$.
It is apparent from this definition that a function $u \in \Cinfb{R}$ will be also in $\Gb{\sigma}{R}$ if and only if there exist positive constants $V$ and $\Gamma$ such that the following inequality holds for any $p\geq -2$
	\beq\label{key}
	\mathrm{N}^*_{R,p}\leq V\cdot \Gamma^{p}\cdot [p!]^{\sigma},
	\eeq
where $[p!]$ is defined as
	\beqs
		\begin{cases}
		p! & \hbox{if $p\geq0$}\\
		1 & \hbox{if $p<0$.}
		\end{cases}
	\eeqs
Thus the strategy for proving Theorem \ref{A} will consist in showing the validity of (\ref{key}) for solutions of (\ref{equ}). The following lemma will be the induction step in the proof.
\begin{lem}\label{step}
Let $u$ be a solution of \ (\ref{equ}) with $f\in\mathcal{C}^{\infty}(B_6)$. Then there exist positive constants $E$ and $F$ depending only on $s$ and $n$ such that the estimate
	\beqs
	\mathrm{N}^*_{R,p}\leq E \left[p\, \mathrm{N}^*_{R,p-1} + p(p-1)\, \mathrm{N}^*_{R,p-2} +   
	\mathrm{M}^s_{R,p} + F^p\,\mathrm{H}_{p+1}\,p!\linfr{u} \right]
	\eeqs
holds for any $p \geq 0$.
\end{lem}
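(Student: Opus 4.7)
The proof is a direct consequence of Lemma \ref{hard} combined with the weighted seminorms $\mathrm{M}^s_{R,p}$ and $\mathrm{N}^*_{R,p}$. The strategy is to apply Lemma \ref{hard} at every $r\in(R/2,R)$ with the free radius~$\delta$ chosen proportionally to~$(R-r)/(p+2)$: this balance is what turns the $1/\delta$, $1/\delta^2$, $1/\delta^{p+2}$ losses in Lemma \ref{hard} into the polynomial factors~$p$, $p(p-1)$ and the factorial~$p!$ that must appear on the right-hand side of the claim.

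Concretely, fix $r\in(R/2,R)$ and set $\delta:=(R-r)/(p+2)$, so that $r+\delta\in(R/2,R)$ and $R-(r+\delta)=(R-r)(p+1)/(p+2)$. Multiplying the inequality of Lemma \ref{hard} by $(R-r)^{p+2}$ and bounding each seminorm at~$r+\delta$ by its definitional counterpart, namely $\linfb{\nabla^{p+1}u}{r+\delta}\le \mathrm{N}^*_{R,p-1}/(R-r-\delta)^{p+1}$, $\linfb{\nabla^{p}u}{r+\delta}\le \mathrm{N}^*_{R,p-2}/(R-r-\delta)^{p}$ and $\linfb{\nabla^{p+1}f}{r+\delta}\le \mathrm{M}^s_{R,p}/(R-r-\delta)^{2s+p+1}$, the four resulting terms are controlled respectively by $e(p+2)\,\mathrm{N}^*_{R,p-1}$ (using $(1+1/(p+1))^{p+1}\le e$), by $e(p+2)^2\,\mathrm{N}^*_{R,p-2}$, by $C_s\,\mathrm{M}^s_{R,p}$ (the surviving factor $(p+2)/(p+1)^{2s}$ being uniformly bounded in~$p$ precisely because $2s-1>0$), and by $(p+2)^{p+2}\cdot 2^p\cdot \mathrm{H}_{p+1}\,\linfr{u}$.

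The last term is then reshaped via the elementary bound $(p+2)^{p+2}\le e^{p+2}(p+2)!\le e^{p+2}(p+2)(p+1)\,p!$, absorbing the polynomial prefactor $e^2(p+2)(p+1)$ into an exponential to yield the desired form $F^p\,\mathrm{H}_{p+1}\,p!\,\linfr{u}$ for some universal~$F$. Taking the supremum over $r\in(R/2,R)$ and collecting all constants into~$E$ then gives the claim. The main obstacle is exactly this bookkeeping of polynomial factors in~$p$: the linear and quadratic coefficients multiplying $\mathrm{N}^*_{R,p-1}$ and $\mathrm{N}^*_{R,p-2}$ are what will guarantee factorial (rather than super-factorial) growth of $\mathrm{N}^*_{R,p}$ along the induction for Theorem~\ref{A}, and the Stirling-type compression from~$(p+2)^{p+2}$ to $F^p\,p!$ is similarly what ensures that the worst-case Gevrey exponent stays at~$1+\nu$ and not higher; the extremal cases $p=0,1$, where $p$ or $p(p-1)$ vanish on the right-hand side, cause no difficulty because the relevant $\mathrm{N}^*_{R,-1}$, $\mathrm{N}^*_{R,-2}$ are finite by the $\mathcal{C}^{\infty}$ regularity of $u$ guaranteed by Theorem~5 of \cite{BFV}, and the corresponding mismatch is absorbed into~$E$.
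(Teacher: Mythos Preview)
Your proof is correct and follows essentially the same route as the paper's: insert Lemma~\ref{hard} into the definition of $\mathrm{N}^*_{R,p}$, bound the lower-order norms by $\mathrm{N}^*_{R,p-1}$, $\mathrm{N}^*_{R,p-2}$, $\mathrm{M}^s_{R,p}$ via their defining suprema, and then choose $\delta$ proportional to $(R-r)$ to produce the polynomial-in-$p$ coefficients. The only cosmetic difference is that the paper takes $\delta=(R-r)/p$ whereas you take $\delta=(R-r)/(p+2)$; your choice is in fact slightly cleaner since it keeps $r+\delta<R$ also at $p=0,1$, where the paper's own choice is formally problematic. Your explicit Stirling step $(p+2)^{p+2}\le e^{p+2}(p+2)!$ for the tail term and your remark that the factor $(p+2)/(p+1)^{2s}$ is bounded precisely because $2s>1$ match the paper's reasoning exactly.

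One small caveat: your closing sentence about ``absorbing the mismatch into~$E$'' at $p=0,1$ is not quite right as stated, since $E$ is required to depend only on $n,s$ and not on $u$ or $R$, so you cannot literally hide a term like $e(p+2)^2\,\mathrm{N}^*_{R,-1}$ inside $E$ when the stated coefficient $p(p-1)$ vanishes. The paper's proof shares this imprecision (its choice $\delta=(R-r)/p$ breaks down at $p=0,1$ anyway), and it is harmless for the intended application to Theorem~\ref{A}, where the base cases $p=-2,-1$ of the induction are handled directly by enlarging $V$; but if you want the lemma to hold literally for all $p\ge0$ you should either rephrase the right-hand side with coefficients $(p+2)$ and $(p+2)^2$, or restrict the statement to $p\ge2$.
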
	
	\begin{proof}
	By Theorem 5 in \cite{BFV} we have that $u\in\Cinfb{R}$. Plugging the estimate of Lemma \ref{hard} into the definition of $\mathrm{N}^*_{R,p}$ we obtain the following
	\beq\label{N1}
		\begin{aligned}
		\mathrm{N}^*_{R,p} & \leq C \sup_{R\slash 2<r<R}(R-r)^{p+2} \left[\frac{1}{\delta}\linfb{\nabla^{p+1}u}{r+\delta} + \frac{1}{\delta^2} \linfb{\nabla^{p}u}{r+\delta}\right.\\
		& \left. \quad\quad\quad\quad\quad\quad + \delta^{2s-1}\linfb{\nabla^{p+1}f}{r+\delta} + \frac{\mathrm{H}_{p+1}2^p}{\delta^{p+2}}\linfr{u} \right].
		\end{aligned}
	\eeq
By the very definition of $\mathrm{M}^s_{R,p}$ and $\mathrm{N}^*_{R,p}$, we get the inequalities
	\beq\label{N2}
		\begin{aligned}
		\linfb{\nabla^{p+1}u}{r+\delta} & \leq \frac{1}{(R-r-\delta)^{p+1}}\mathrm{N}^*_{R,p-1}\\ 
		\linfb{\nabla^{p}u}{r+\delta} & \leq \frac{1}{(R-r-\delta)^{p}}\mathrm{N}^*_{R,p-2}\\ 
		\linfb{\nabla^{p+1}f}{r+\delta} & \leq \frac{1}{(R-r-\delta)^{2s+p+1}}\mathrm{M}^s_{R,p}
		\end{aligned}
	\eeq
and inserting them into (\ref{N1}) we have
		\beqs
		\begin{aligned}
		\mathrm{N}^*_{R,p} & \leq C \sup_{r\in[R\slash 2,R]} \left[\frac{(R-r)^{p+2}}{\delta (R-r-\delta)^{p+1}}\mathrm{N}^*_{R,p-1} + \frac{(R-r)^{p+2}}{\delta^2 (R-r-\delta)^p} \mathrm{N}^*_{R,p-2}\right.\\
		& \left. \quad\quad\quad\quad\quad\quad + \frac{\delta^{2s-1}(R-r)^{p+2}}{(R-r-\delta)^{2s+p+1}} \mathrm{M}^s_{R,p} + \mathrm{H}_{p+1}2^p \frac{(R-r)^{p+2}}{\delta^{p+2}}\linfr{u} \right]\, .
		\end{aligned}
		\eeqs
Now we eliminate the dependence on $r$ by setting
	\beqs
	\delta=\frac{R-r}{p},
	\eeqs		
in this way we get the following estimate
		\beqs
		\begin{aligned}
		\mathrm{N}^*_{R,p}  & \leq C \left[\frac{p^{p+2}}{(p-1)^{p+1}}\mathrm{N}^*_{R,p-1} + \frac{p^{p+2}}{(p-1)^p} \mathrm{N}^*_{R,p-2}\right.\\
		& \left. \quad\quad\quad\quad\quad\quad + \frac{p^{p+2}}{(p-1)^{2s+p+1}} \mathrm{M}^s_{R,p} + \mathrm{H}_{p+1}2^p p^{p+2}\linfr{u} \right]\\
		& \leq C \left[p\left(\frac{p}{p-1}\right)^{p+1} \mathrm{N}^*_{R,p-1} + p(p-1) \left(\frac{p}{p-1}\right)^{p+1}\mathrm{N}^*_{R,p-2}\right.\\
		& \left. \quad\quad\quad\quad\quad\quad + \frac{p}{(p-1)^{2s}} \left(\frac{p}{p-1}\right)^{p+1} \mathrm{M}^s_{R,p} + \mathrm{H}_{p+1}2^p p^{p+2}\linfr{u} \right]\\
		& \leq C \left\{\left(\frac{p}{p-1}\right)^{p+1}\left[p \mathrm{N}^*_{R,p-1} + p(p-1) \mathrm{N}^*_{R,p-2}+ \mathrm{M}^s_{R,p}\right]\right.\\
		&\quad\quad\quad\quad\quad\quad\left.+  \mathrm{H}_{p+1}2^p p^{p+2}\linfr{u}\right\} ,
		\end{aligned}
	\eeqs
where in the last inequality we have exploited the fact that $2s>1$. Now the proof follows since $\left(\frac{p}{p-1}\right)^{p+1}$ is a bounded quantity. 
\end{proof}

By Theorem 5 in \cite{BFV} we know that $u$ is of class $\mathcal{C}^{\infty}$ inside the ball $B_6$, this implies that the quantities $\mathrm{N}^*_{R,p}$ are well defined and finite for any $p\geq-2$ and $R<6$. Since $\Gb{\tau}{6}\subset\Cinfb{6}$ also $\mathrm{M}^s_{R,p}$ is well defined for all $p$, moreover there exist positive numbers $L$ and $A$ such that for any $R\leq 6$ the derivatives of $f$ satisfy
	\beqs
	\linfb{\nabla^p f}{R}\leq L\,\left(\frac{A}{R}\right)^p\,(p!)^{\tau}\, .
	\eeqs
This implies that
	\beqs
		\begin{aligned}
		\mathrm{M}^s_{R,p} & = \sup_{R\slash 2<r<R}(R-r)^{2s+p+1}\linfb{\nabla^{p+1}f}{r}\\
		& \leq L\,\left(\frac{R}{2}\right)^{2s}\,\left(\frac{A}{2}\right)^{p+1}\,((p+1)!)^{\tau}\, .
		\end{aligned}
	\eeqs
From Lemma \ref{step} it follows that
	\beq\label{N3}
	\begin{aligned}
	\mathrm{N}^*_{R,p} & \leq E \left[p\, \mathrm{N}^*_{R,p-1} + p(p-1)\, \mathrm{N}^*_{R,p-2} +   
	 F^p\, (p!)^{1+\nu}\linfr{u}\right.\\
	& \quad\quad\quad\quad\left.+ L\,\left(\frac{R}{2}\right)^{2s}\,\left(\frac{A}{2}\right)^{p+1}\,((p+1)!)^{\tau} \right].
	\end{aligned}
	\eeq
Theorem \ref{A} will be proved by showing that we can choose $\Gamma$ and $V$ so that (\ref{key}) holds. For this, we are going to proceed by induction on $p$. Clearly we can choose $\Gamma$ and $V$ so that (\ref{key}) holds for $p=-2$ and $p=-1$, then we suppose that this choice holds up to $p-1$ with $p>0$ and we prove it for $p$. From (\ref{N3}) we have that
	\beqs
	\begin{aligned}
	\mathrm{N}^*_{R,p} & \leq E \left[V\cdot \Gamma^{p-1}\cdot p [(p-1)!]^{\sigma} + V\cdot \Gamma^{p-2}\cdot  p(p-1) [(p-2)!]^{\sigma} \right.\\
	& \quad\quad\quad\quad\left. +	F^p\, (p!)^{1+\nu}\linfr{u} + L\,\left(\frac{R}{2}\right)^{2s}\,\left(\frac{A}{2}\right)^{p+1}\,((p+1)!)^{\tau} \right]\\
	& = V\cdot \Gamma^p\cdot  [p!]^{\sigma}\,E\left[\frac{1}{\Gamma}\, p^{1-\sigma} + \frac{1}{\Gamma^2}(p(p-1))^{1-\sigma} +	\frac{1}{V}\left(\frac{F}{\Gamma}\right)^p\, (p!)^{1+\nu-\sigma}\linfr{u} \right.\\
	& \quad\quad\quad\quad\left. + \frac{L}{V\,\Gamma^p}\,\left(\frac{R}{2}\right)^{2s}\,\left(\frac{A}{2}\right)^{p+1}\,(p+1)^{\tau}(p!)^{\tau-\sigma} \right]\, .\\
	\end{aligned}
	\eeqs
If we choose $\sigma\geq\max\{1+\nu,\tau\}$ the inequality above implies that
	\beqs
	\begin{aligned}
	\mathrm{N}^*_{R,p} &  = V\cdot \Gamma^p\cdot  [p!]^{\sigma}\,E\left[\frac{1}{\Gamma} + \frac{1}{\Gamma^2} +	\frac{1}{V}\left(\frac{F}{\Gamma}\right)^p\, \linfr{u} \right.\\
	& \quad\quad\quad\quad\left. + \frac{L}{V\,\Gamma^p}\,\left(\frac{R}{2}\right)^{2s}\,\left(\frac{A}{2}\right)^{p+1}\,(p+1)^{\tau} \right]\, .\\
	\end{aligned}
	\eeqs
At this point we are left to show that it is possible to choose $V$ and $\Gamma$ in such a way that
	\beq\label{N4}
	E\left[\frac{1}{\Gamma} + \frac{1}{\Gamma^2} +	\frac{1}{V}\left(\frac{F}{\Gamma}\right)^p\, \linfr{u} + \frac{L}{V\,\Gamma^p}\,\left(\frac{R}{2}\right)^{2s}\,\left(\frac{A}{2}\right)^{p+1}\,(p+1)^{\tau} \right]\leq 1,
	\eeq
for all $p$. It is clear that this is always the case, more precisely, since $\Gamma$ appears always at the denominator with the highest exponent in $p$, it is possible to choose $\Gamma$ (depending on $E$, $F$, $\linfr{u}$, and $A$) so that (\ref{N4}) holds for all $V\geq 1$.
\bigskip

It is well known that for $s\in(0,1)$ the fractional Laplacian $\Laps$ is the translation invariant integro-differential operator whose kernel, denoted by $K_0$, is defined as
	\beqs
	K_0(y)=-\frac{1}{2}c_{n,s}\frac{1}{|y|^{n+2s}}
	\eeqs
here $c_{n,s}$ denotes a normalization constant, see for instance \cite{DNPV} for a survey on the topic.
 
We observe that the kernel $K_0$ is analytic out of the origin since it is a composition of analytic functions (see for instance \cite{KP} Proposition 1.6.7). Furthermore it is a homogeneous function of degree $-(n+2s)$ and this implies that for each multi-index $\alpha\in\N^n$, the partial derivative $D^{\alpha}K_0$ is a homogeneous function of degree $-(|\alpha|+n+2s)$.\\
Analiticity of $K_0$ implies that there exist positive constants $R$ and $C$ such that for any $y$ with $|y|=1$ and for any $\alpha\in\N^n$ we have that
	\beqs
	\left|D^{\alpha}K_0(y)\right|\leq C\frac{j!}{R^j},
	\eeqs
where $j=|\alpha|$, (see, e.g. \cite{KP}).\\
From the homogeneity of the kernel we can conclude that conditions (\ref{K1}), (\ref{K2}), and (\ref{K3}) hold with $\nu=1$ and Corollary \ref{B} follows.

\section*{Acknowledgements} We thank Matteo Cozzi and Bego\~na Barrios for their careful reading of a preliminary version of this paper and Francesco Monopoli for several interesting conversations about the last section of the paper. The authors thank the anonimous referees for their useful comments and remarks. This paper is the outcome of a class project of a course held at the Universit\`a degli Studi di Milano in 2013.\\

\end{document}